\newtheorem{theorem}{Theorem}[section]
\newtheorem{lemma}[theorem]{Lemma}
\theoremstyle{definition}
\theoremstyle{remark}
\numberwithin{equation}{section}
\newcommand{\norm}[1]{\left\lVert#1\right\rVert}
\DeclareMathOperator{\buc}{BUC}
\DeclareMathOperator{\lip}{LIP}
\DeclareMathOperator{\AC}{AC}
\begin{document}

\title[Optimal convergence rate]{Optimal convergence rate for homogenization of convex Hamilton-Jacobi equations in the periodic spatial-temporal environment}


\author{Hoang Nguyen-Tien}
\address{}
\curraddr{}
\email{}
\thanks{This work is partly supported by grant U2022-01 at the VNU-HCM University of Science}


\subjclass[2020]{35B10 35B27 35B40 35F21 49L25.}

\date{\today}

\dedicatory{}


\begin{abstract}
  We study the optimal convergence rate for homogenization problem of convex Hamilton-Jacobi equations when the Hamitonian is periodic with respect to spatial and time variables, and notably time-dependent. We prove a result similar to that of \cite{hungyu}, which means the optimal convergence rate is also $O(\varepsilon)$.
\end{abstract}

\maketitle

\section{Introduction}
\subsection{Settings}

For each $\varepsilon>0$, let $u^\varepsilon\in C(\mathbb R^n\times [0,\infty))$ be an unique viscosity solution of the following Cauchy problem
\begin{equation}\label{hj-cauchy-timedep}
    \begin{cases}
        u_t^\varepsilon+H\left(\dfrac{x}{\varepsilon},\dfrac{t}{\varepsilon},Du^\varepsilon\right)&=0\quad\quad \ \ \text{in $\mathbb R^n\times (0,\infty)$},\\
        u^\varepsilon(x,0)&=g(x)\quad \ \text{on $\mathbb R^n$}.
    \end{cases}
\end{equation}

The Hamiltonian $H:\mathbb R^n\times \mathbb R\times \mathbb R^n\rightarrow \mathbb R$ is a given continuous function satisfying:
\begin{enumerate}[label=\roman*)]
    \item[(P1)] for each $p\in \mathbb R^n$, the map $(x,t)\mapsto H(x,t,p)$ is $\mathbb Z^{n+1}-$periodic, which means
        $$H(x,t,p)=H(x+u,t+v,p)\quad \forall (u,v)\in \mathbb Z^n\times \mathbb Z;$$
    \item[(P2)] there exist some positive constants $\alpha_0,\beta_0, K_0$ and $m_0>1$ such that
      $$\alpha_0|p|^{m_0}-K_0\le H(x,t,p)\le \beta_0|p|^{m_0}+K_0;$$
    \item[(P3)] for $(x,t)\in \mathbb R^{n+1}/\mathbb Z^{n+1}$, the map $p\mapsto H(x,t,p)$ is convex.
\end{enumerate}

We assume the initial data $g\in \buc(\mathbb R^n)\cap \lip(\mathbb R^n)$, where $\buc(\mathbb R^n)$ and $\lip(\mathbb R^n)$ are respectively the set of bounded uniformly continuous, and Lipschitz functions, on $\mathbb R^n$. By those assumptions, $u^\varepsilon$ converges locally uniformly on $\mathbb R^n\times [0,\infty)$ to a function $u$ as $\varepsilon\rightarrow 0$, and $u$ solves the effective equation
\begin{equation}\label{hj-eff-timedep}
    \begin{cases}
        u_t+\overline{H}\left(Du\right)&=0\quad\quad \ \ \text{in $\mathbb R^n\times (0,\infty)$},\\
        u(x,0)&=g(x)\quad \ \text{on $\mathbb R^n$}.
    \end{cases}
\end{equation}
with $\overline{H}$ is defined as follows. For each $p\in\mathbb R^n$, there is a unique constant $\overline{H}(p)$ such that
$$v_s+H(y,s,p+Dv)=\overline{H}(p)$$
has a continuous viscosity solution; see \cite{lpv, evans2, hung} for more details.

Condition (P2) is natural when the given Hamiltionian is time-dependent, see \cite{evans2}. Also, there are some positive constants $\alpha,\beta, K$ and $m>1$ satisfying
$$\alpha|v|^{m}-K\le L(x,t,v)\le \beta|v|^{m}+K$$
Here, $L(x,t,v)$ is the Legendre transform of $H(x,t,p)$ defined by
$$L(x,t,v)=\sup_{p\in \mathbb R^n}\left(p\cdot v-H(x,t,p)\right)$$
and $\frac{1}{m}+\frac{1}{m_0}=1$. 
The optimal control formula for solution of (\ref{hj-cauchy-timedep}) is
    $$u^{\varepsilon}(x,t)=
        \inf_{\substack{\varepsilon\eta(0)=x \\ \eta\in \AC([-\varepsilon^{-1}t,0],\mathbb R^n)}}
        \left\{g\left(\varepsilon\eta \left(-\varepsilon^{-1}t\right)\right)+\varepsilon\int_{-\varepsilon^{-1}t}^{0}L\left(\eta(s),s,\dot{\eta}(s)ds\right) \right\}$$
where $\AC([-\varepsilon^{-1}t,0],\mathbb R^n)$ is the space of absolutely continuous curves in $\mathbb R^n$ defined on $[-\varepsilon^{-1}t,0]$.

\subsection{Previous literature}

We give a brief review on finding optimal convergence rate for homogenization of Hamilton-Jacobi equations.

\begin{enumerate}
    \item If one consider general Hamiltonian, the best known convergence rate is $O(\varepsilon^{1/3})$ by Capuzzo-Dolcetta and Ishii \cite{ishii}; see a detailed explanation at \cite{hung}. It only relies on classical methods, hence one needs further assumptions on Hamiltonian, such as convexity or Lipschitz property, to improve the result.
    \item When the Hamiltonian is convex, the lower bound $u^{\varepsilon}-u\ge C\varepsilon$ always holds by a result of Mitake, Tran and Yu \cite{near-optimal}, and a similar upper bound is also true for $n\le 2$. Besides, if we consider multi-scaled Hamiltonian in one-dimensional settings, Tu also showed that the convergence rate is $O(\varepsilon)$ \cite{son}.
    \item When $n\ge 3$, by using idea from first passage percolation method, Cooperman pointed out an unconditional upper bound $|u^\varepsilon(x,t)-u(x,t)|\le C\varepsilon\log(C+t\varepsilon^{-1})$, see \cite{cooperman}.
    \item Later on, Tran and Yu established the $O(\varepsilon)$ convergence rate \cite{hungyu}; their proof based on an unexpected lemma on periodic metrics by Burago, see \cite{burago}. It is likely that Burago's result could be adapted for more complicated situations, such as time-dependent or multi-scaled convex Hamiltonian.
\end{enumerate}

\subsection{Main results}

\begin{theorem}\label{hj-rate-timedep}
  Let $u^\varepsilon$ be the viscosity solution to \eqref{hj-cauchy-timedep} and $u$ be the viscosity solution to \eqref{hj-eff-timedep}. Then, there exists $C>0$ depending only on $H,\norm{Dg}_{L^\infty(\mathbb R^n)}$ and $n$ such that, for each $\varepsilon\in (0,1)$,
  $$\norm{u^\varepsilon-u}_{L^\infty(\mathbb R^n\times (0,\infty))}\le C\varepsilon.$$
\end{theorem}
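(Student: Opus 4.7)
The plan is to follow the strategy of Tran and Yu \cite{hungyu} developed for the stationary case, and to extend Burago's periodic-metric lemma \cite{burago} to the $(n+1)$-dimensional space-time setting. Under (P2)--(P3), $\|Du^\varepsilon\|_\infty$ is controlled by a constant $C_0 = C_0(H,\|Dg\|_\infty)$, so it suffices to handle momentum variables with $|p|\le C_0$.

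For each such $p$, I would introduce the corrected action
\[
m_p\bigl((x,s),(y,t)\bigr) := \inf_{\substack{\eta\in\AC([s,t],\mathbb{R}^n)\\ \eta(s)=x,\ \eta(t)=y}} \int_s^t \bigl[L(\eta(\tau),\tau,\dot\eta(\tau)) + \overline{H}(p) - p\cdot\dot\eta(\tau)\bigr] d\tau,
\]
defined for $s\le t$. Condition (P1) yields diagonal $\mathbb{Z}^{n+1}$-invariance, and concatenation at intermediate times gives subadditivity. Using any continuous corrector $w_p$ from the time-dependent cell problem $(w_p)_\tau + H(y,\tau,p+Dw_p) = \overline{H}(p)$, Fenchel's inequality produces $m_p((x,s),(y,t)) \ge w_p(y,t)-w_p(x,s) \ge -2\|w_p\|_\infty$, and testing near-optimal trajectories of the cell problem furnishes an analogous upper bound.

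The central new ingredient, which I will call the \emph{space-time Burago bound}, is the following uniform estimate, valid for all $(x,s),(y,t)$ with $t-s\ge 1$ and all $|p|\le C_0$:
\[
\Bigl| m_p\bigl((x,s),(y,t)\bigr) - (t-s)\overline{L}\bigl(\tfrac{y-x}{t-s}\bigr) - (t-s)\overline{H}(p) + p\cdot(y-x)\Bigr| \le C.
\]
In the stationary case of \cite{hungyu}, this is exactly Burago's theorem \cite{burago} applied to the $\mathbb{Z}^n$-invariant subadditive cost $m_p$. In the present setting, $m_p$ must be regarded as a directed $\mathbb{Z}^{n+1}$-periodic quasi-metric on the cone $\{t>s\}$; Burago's inductive construction---cutting a near-optimal trajectory into bounded space-time periods, reassembling via periodicity, and comparing by subadditivity---should carry over, with the proviso that all concatenations respect the direction of time. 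This is where I expect the main difficulty, since the shortcutting arguments in \cite{burago} freely use reversible tiling, whereas here only forward-in-time concatenations are admissible; the convexity and coercivity of $L$ should allow the near-optimal building blocks to be assembled in a temporally monotone fashion.

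Once the space-time Burago bound is in hand, the theorem follows by direct comparison. Using the Hopf--Lax formula $u(x,t) = \inf_y\{g(y)+t\overline{L}(\tfrac{x-y}{t})\}$ and the optimal control formula for $u^\varepsilon$, the rescaling $\eta(\sigma) = \gamma(\varepsilon\sigma)/\varepsilon$ yields, for every $y\in\mathbb{R}^n$ and $|p|\le C_0$,
\[
u^\varepsilon(x,t) \le g(y) + \varepsilon\,m_p\bigl((y/\varepsilon,-t/\varepsilon),(x/\varepsilon,0)\bigr) - t\overline{H}(p) + p\cdot(x-y).
\]
Choosing $p\in\partial\overline{L}(\tfrac{x-y}{t})$, so that $\overline{L}(\tfrac{x-y}{t})+\overline{H}(p)=p\cdot\tfrac{x-y}{t}$, the space-time Burago bound reduces the right-hand side to $g(y) + t\overline{L}(\tfrac{x-y}{t}) + O(\varepsilon)$; optimizing in $y$ gives $u^\varepsilon \le u + C\varepsilon$. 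The reverse inequality is obtained by taking an almost-optimal trajectory $\gamma$ for $u^\varepsilon(x,t)$, setting $y=\gamma(-t)$, and applying the lower direction of the space-time Burago bound together with Legendre duality. The short-time regime $t<\varepsilon$ is absorbed by the growth condition (P2) and the Lipschitz bound on $g$.
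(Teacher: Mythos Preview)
Your strategy coincides with the paper's: establish a uniform bound $|m((x,s),(y,t)) - (t-s)\overline{L}((y-x)/(t-s))| \le C$ on the action functional via a space-time adaptation of Burago's lemma, then compare the optimal-control representation of $u^\varepsilon$ with the Hopf--Lax formula for $u$. Note that your $p$-correction is superfluous: because $\int_s^t p\cdot\dot\eta\,d\tau = p\cdot(y-x)$ for every admissible curve with fixed endpoints, one has $m_p = m_0 + (t-s)\overline{H}(p) - p\cdot(y-x)$ exactly, so your ``space-time Burago bound'' is literally the $p$-independent statement $|m_0 - (t-s)\overline{L}|\le C$, and the corrector $w_p$ plays no role in the sharp estimate. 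The paper works directly with $m_0$.

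The genuine gap is the one you flag but do not resolve. In the stationary setting of \cite{hungyu}, minimizing curves are uniformly Lipschitz, so after applying Burago's lemma one connects the selected subarcs by straight segments of negligible cost. Under (P2) the Lagrangian has only $m$-th power growth, minimizers are merely $W^{1,m}$, and connectors must occupy strictly positive \emph{time}; moreover every time shift used in the reassembly must be an integer in order to invoke $\mathbb{Z}^{n+1}$-periodicity of $L$. The paper resolves this with an explicit ``time-saving'' construction: along the minimizer one locates by pigeonhole a subinterval of bounded length on which $\int|\dot\eta|^m\le C$, reparametrizes so as to traverse that piece in a fraction of the original time (the cost remains $O(1)$ by the upper growth bound on $L$), and spends the freed time on straight connectors of duration in $[1,2]$ between the Burago pieces, with all spatial and temporal shifts taken in $\mathbb{Z}^{n+1}$. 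This yields the two doubling inequalities $m(2t,0,2y)\le 2m(t,0,y)+C$ and $2m(t,0,y)\le m(2t,0,2y)+C$, and iteration along dyadic scales gives the bound. Your phrase ``convexity and coercivity of $L$ should allow the near-optimal building blocks to be assembled in a temporally monotone fashion'' is pointing in the right direction but supplies none of this machinery; without it the argument does not close.
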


Its proof generally follows that of \cite{hungyu}, but there are some new contributions. Since $L$ depends on time variable, one cannot apply exactly the same techniques used for time-independent case, which requires boundedness of velocity of the minimizing curves. Additionally, rescaling and periodic shifting processes are far more complicated. Periodic variables could only be shifted via integer vectors, hence if those previous ideas are reused, one needs to carefully take care of subcurves decomposed from the minimizers and connectors joining them.

\section{Proof of the main result}

Before starting, we restate an important lemma:

\begin{lemma}[Burago, \cite{burago}] Let $n\in \mathbb Z^+$ and $\xi:[0,1]\rightarrow \mathbb R^n$ be a continuous path. Then, there is a positive integer $k\le \frac{n+1}{2}$ and a collection of disjoint intervals $\{[a_i,b_i]\}_{1\le i\le k}$, which are subintervals of $[0,1]$, such that
    $$\sum_{i=1}^{k}\left(\xi(b_i)-\xi(a_i)\right)=\dfrac{\xi(1)-\xi(0)}{2}.$$
\end{lemma}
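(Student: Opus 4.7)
My plan is to establish Burago's lemma via the Borsuk--Ulam theorem, by parametrizing disjoint unions of subintervals of $[0,1]$ continuously and antipodally by points on the sphere $S^n \subset \mathbb R^{n+1}$. For $x = (x_0, \ldots, x_n) \in S^n$, I set $t_{-1}(x) = 0$ and $t_i(x) = \sum_{j=0}^i x_j^2$ for $0 \le i \le n$, so that $t_n(x) = 1$, and define
$$A(x) := \bigcup_{i:\, x_i > 0} [t_{i-1}(x), t_i(x)], \qquad F(x) := \sum_{i:\, x_i > 0} \bigl(\xi(t_i(x)) - \xi(t_{i-1}(x))\bigr) \in \mathbb R^n.$$
Continuity of $F$ follows because $t_i(x) - t_{i-1}(x) = x_i^2$, so whenever $x_i \to 0$ the corresponding summand tends to $0$ and the jump from including or excluding that term disappears in the limit. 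A telescoping sum then gives the crucial antipodal identity
$$F(x) + F(-x) = \sum_{i=0}^n \bigl(\xi(t_i(x)) - \xi(t_{i-1}(x))\bigr) = \xi(1) - \xi(0).$$

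Next, I would apply the Borsuk--Ulam theorem to the continuous map $f(x) := F(x) - \tfrac{1}{2}(\xi(1) - \xi(0))$ from $S^n$ to $\mathbb R^n$, which is odd by the identity above. This yields $x^* \in S^n$ with $f(x^*) = 0$, so $F(x^*) = F(-x^*) = \tfrac{1}{2}(\xi(1) - \xi(0))$, and both $A(x^*)$ and $A(-x^*)$ realize the desired sum of increments. It remains to bound the number of connected components. The set $A(x^*)$ is a union of one interval per maximal block of consecutive indices $i$ with $x_i^* \ge 0$ containing at least one strict positive (zero-length subintervals from $x_i^* = 0$ merge adjacent positive blocks rather than splitting them), and symmetrically for $A(-x^*)$. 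Dropping the zero signs produces a reduced sequence in $\{+,-\}$ of length at most $n+1$ in which $+$-runs and $-$-runs alternate; their counts therefore differ by at most one and sum to at most $n+1$. Consequently at least one of $A(\pm x^*)$ has at most $\lfloor (n+1)/2 \rfloor \le (n+1)/2$ components, and choosing this set yields the required collection $\{[a_i, b_i]\}_{1 \le i \le k}$.

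The main obstacle I anticipate is the bookkeeping around vanishing coordinates: rigorously justifying the continuity of $F$ at $\{x_i = 0\}$, verifying that degenerate zero-length subintervals do not inflate the component count of $A(\pm x^*)$, and handling the trivial edge case $\xi(1) = \xi(0)$, where one may simply take a degenerate interval. Once these technicalities are addressed, the Borsuk--Ulam application is the conceptual core and the remainder is elementary combinatorics on sign patterns.
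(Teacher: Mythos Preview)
The paper does not supply its own proof of this lemma; it is quoted as a known result with a citation to Burago's original paper and then used as a black box in the subsequent arguments. Your Borsuk--Ulam argument is the standard proof of this statement (essentially the same mechanism as in the Hobby--Rice theorem) and is correct: the parametrization by squared coordinates makes $F$ continuous across the hypersurfaces $\{x_i=0\}$, the telescoping identity gives oddness of $f$, and the run-counting argument on the reduced sign sequence yields the bound $k\le\lfloor(n+1)/2\rfloor$ after choosing whichever of $A(\pm x^*)$ has fewer components. The technicalities you flag (continuity at vanishing coordinates, degenerate intervals, the edge case $\xi(1)=\xi(0)$) are genuine but routine, and your outline already indicates how each is handled.
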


Note that Burago's lemma still holds for a path defined on $[a,b]$ by a simple rescaling; this observation will be useful later. Now for $x,y\in \mathbb R^n$ and $t>0$, denote by
$$m(t,x,y)=\inf\left\{\int_{0}^{t}L\left(\eta(s),s,\dot{\eta}(s)\right)ds \mid \eta\in\AC([0,t],\mathbb R^n),\eta(0)=x,\eta(t)=y\right\}.$$

One might imagine $m(t,x,y)$ as the minimum cost to travel from $x$ to $y$ in a given time $t>0$. The homogenized cost is
$$\overline{m}(t,x,y)=\lim_{k\rightarrow \infty}\dfrac{1}{k}m(kt,kx,ky).$$
Thanks to the Hopf-Lax formula for (\ref{hj-eff-timedep}), for any $t>0$,
$$\overline{m}(t,x,y)=t\overline{L}\left(\dfrac{y-x}{t}\right),$$
and also for any $s>0$,
$$\overline{m}(st,sx,sy)=st\overline{L}\left(\dfrac{s(y-x)}{st}\right)=s\overline{m}(t,x,y).$$

\begin{lemma}\label{hj-subadd-timedep}
    With all above assumptions, the following properties hold:
    \begin{enumerate}[label=\roman*)]
        \item $m$ is $\mathbb Z^n-$periodic: for $x,y\in \mathbb R^n,w\in \mathbb Z^n$ and $t>0$,
            $$m(t,x+w,y+w)=m(t,x,y).$$
        \item For $t>0$ and $|y|\le Mt$:
            $$m(2t,0,2y)\le 2m(t,0,y)+C.$$
        In particular,
            $$\overline{m}(t,0,y)\le m(t,0,y)+C.$$
    \end{enumerate}
    Here, $C>0$ is a universal constant depending only on $K,M,n,\alpha$ and $\beta$. 
\end{lemma}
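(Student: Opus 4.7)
Part (i) follows directly from spatial periodicity of the Lagrangian. For $w\in\mathbb Z^n$, the map $\eta\mapsto\eta+w$ is a bijection between $\AC$ paths $[0,t]\to\mathbb R^n$ joining $x,y$ and those joining $x+w,y+w$, and since $L(\cdot+w,s,v)=L(\cdot,s,v)$ by the $\mathbb Z^{n+1}$-periodicity of $H$ (hence of $L$) in the spatial variable, it preserves $L$-cost. Taking infima in both directions yields the equality.

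For part (ii) the plan is to construct an $\AC$ curve $\xi:[0,2t]\to\mathbb R^n$ from $0$ to $2y$ whose $L$-cost is at most $2m(t,0,y)+C$, by splicing together two blocks, each of which is an integer-shifted copy of a near-minimizer $\eta$ of $m(t,0,y)$. The integer-shift property of $L$ — $L(x+w,s+k,v)=L(x,s,v)$ for $(w,k)\in\mathbb Z^{n+1}$ — is crucial, since it preserves the block's cost exactly. The natural choice of shift, $(y,t)$, is unavailable because in general $(y,t)\notin\mathbb Z^{n+1}$, so I would pick $w\in\mathbb Z^n$ nearest to $y$ and $k\in\mathbb Z$ nearest to $t$ with $k\ge t$, giving $|w-y|\le\sqrt n/2$ and $k-t\le 1$, and bridge the resulting $O(1)$-mismatches at the two junctions with short connector curves. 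Since each connector has bounded spatial extent and duration, its $L$-cost is bounded by $O(1)$ via the growth estimate $L\le\beta|v|^m+K$.

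The main obstacle is the bookkeeping at the splice points: $\dot\eta$ is controlled only in $L^m$-average (the straight-line bound $m(t,0,y)\le(\beta M^m+K)t$ combined with coercivity $\alpha|v|^m-K\le L$ gives $\int_0^t|\dot\eta|^m\,ds\le Ct$), so one cannot simply splice at an arbitrary time near $0$ or $t$ and expect the resulting connector to reach a lattice-aligned point cheaply. A Chebyshev-type argument on $\int|\dot\eta|^m$ shows that on a set of measure at least $t/2$ the velocity $|\dot\eta|$ is bounded by a constant $R$ depending only on the data; I would then pick splice times $s_1,s_2$ in this good set with $s_1$ in an initial interval of length $>t/2$ and $s_2$ in a terminal one, and insert the connectors there. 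The integer shift of the second block is chosen so that its starting position (say $\eta(s_1)+w$) differs from the connector's other endpoint by at most $\sqrt n/2$, which allows the connector to be taken linear of bounded speed. Summing: the two shifted blocks contribute $\le 2m(t,0,y)+O(1)$ by periodicity, and the $O(1)$-many connectors contribute $O(1)$, yielding the claimed bound.

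The in-particular statement $\overline m(t,0,y)\le m(t,0,y)+C$ follows by iterating the doubling inequality: $m(2^j t,0,2^j y)\le 2^j m(t,0,y)+(2^j-1)C$ for each $j\in\mathbb N$, and then dividing by $2^j$ and sending $j\to\infty$ gives the claim by the definition of $\overline m$.
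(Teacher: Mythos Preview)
Part (i) and the ``in particular'' iteration at the end are fine and match the paper. The gap is in the core of part (ii): your plan does not explain how the \emph{time budget} closes. Two full copies of a near-minimizer $\eta$ already occupy total duration $2t$, and since $t$ is generically not an integer, the time shift $k\in\mathbb Z$ between the copies forces either an overlap (if $k<t$) or an overshoot past $2t$ (if $k>t$); inserting connectors only makes this worse, since connectors consume additional time. Your Chebyshev step produces a set of large measure on which $|\dot\eta|$ is pointwise bounded, but knowing $|\dot\eta(s_1)|\le R$ at an isolated time $s_1$ gives no control on the \emph{position} $\eta(s_1)$, nor does it free up any time. If instead you truncate the second block to fit inside $[0,2t]$, its terminal point is $\eta(\tau)+w$ for some $\tau<t$, and the final connector must bridge $|\eta(\tau)-y|$, which is a priori of order $t^{1/m}$ and not $O(1)$. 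Choosing $w$ to fix the start of block~2 uses up your one spatial degree of freedom, so you cannot simultaneously control the end.

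The paper's proof supplies exactly the missing mechanism: rather than a pointwise Chebyshev bound, it uses a pigeonhole (averaging) argument on the partition of $[0,t]$ into length-$6$ subintervals to locate an interval $[l,l+6]$ on which $\int_l^{l+6}|\dot\eta|^m\,ds\le C$. On the second copy it then \emph{reparametrizes} $\eta|_{[l,l+6]}$ at $6\times$ speed, traversing it in time $1$ instead of $6$; the growth bound $L\le\beta|v|^m+K$ together with the smallness of $\int_l^{l+6}|\dot\eta|^m$ keeps the extra cost $O(1)$. This frees $5$ units of time, enough to accommodate the two short straight connectors (each of bounded length and duration in $[2,3]$) while still landing exactly at time $2t$. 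The construction also shifts spatially by $y-w\in\mathbb Z^n$ and temporally by $\lceil t\rceil+2\in\mathbb Z$ so that periodicity applies exactly to the shifted pieces. Your outline would become correct if you replaced the pointwise-velocity Chebyshev step by this interval-averaging plus speed-up device.
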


\begin{proof}
    \begin{enumerate}[label=\roman*)]

        \item A minimizer $\eta_1$ of $m(t,x,y)$ induces a path $\eta_2$ from $x+w$ to $y+w$ by shifting. From the $\mathbb Z^{n+1}-$periodicity of $L(y,t,q)$, we deduce:
        \begin{equation*}
            \begin{split}
                m(t,x,y)&=\int_{0}^{t}L\left(\eta_1(s),s,\dot{\eta_1}(s)\right)ds=\int_{0}^{t}L\left(\eta_1(s)+w,s,\dot{\eta_1}(s)\right)ds\\
                &=\int_{0}^{t}L\left(\eta_2(s),s,\dot{\eta_2}(s)\right)ds\ge m(t,x+w,y+w).
            \end{split}
        \end{equation*}
        The reverse inequality is obtained by shifting backward.
        \item For each $t>0$ and $y\in \mathbb R^n$ with $|y|\le Mt$, consider $\eta\in \AC([0,t],\mathbb R^n)$ joining $0$ and $y$ with constant velocity. From the polynomial growth rate of $L(\ .\ ,q)$,
            $$m(t,0,y)\le \int_{0}^{t}\left(\beta \cdot \dfrac{|y|^{m}}{t^{m}}+K\right)dv \le \beta\cdot\dfrac{(Mt)^{m}}{t^{m-1}}+Kt\le Ct.$$
        Similarly $-Ct\le m(t,0,y)$. Hence the theorem is true if $t\le 6$, since we will get
        $$m(2t,0,2y)-2m(t,0,y)\le 2Ct+2Ct\le C.$$
        Now only consider $t>6$. Let $k$ be the greatest positive integer such that $6k\le t$. Also, if we denote the minimizer of $m(t,0,y)$ by $\eta$, then
        \begin{equation*}
            \begin{split}
            \sum_{l=0}^{k-1}\int_{l}^{l+6}L\left(\eta(s),s,\dot{\eta}(s)\right)ds &= \int_{0}^{t}L\left(\eta(s),s,\dot{\eta}(s)\right)ds - \int_{6k}^{t}L\left(\eta(s),s,\dot{\eta}(s)\right)ds\\
            &\le m(t,0,y)-\int_{6k}^{t}\left(\alpha\left|\dot{\eta}(s)\right|^{m}-K\right)ds\\
            &\le Ct+(t-6k)K\le Ct+C\le Ct.
            \end{split}
        \end{equation*}
        Then, there exists $l\in\{0,1,\cdots,k-1\}$ satisfying
            $$\int_{l}^{l+6}L\left(\eta(s),s,\dot{\eta}(s)\right)ds \le \dfrac{Ct}{k}\le C.$$
        From our assumption on $L(\ .\ ,v)$, we obtain
            $$\int_{l}^{l+6}\left(\alpha\left|\dot{\eta}(s)\right|^{m}-K\right)ds\le \int_{l}^{l+6}L\left(\eta(s),s,\dot{\eta}(s)\right)ds \le C,$$
        which can be rewritten as
            $$\int_{l}^{l+6}\left|\dot{\eta}(s)\right|^{m}ds\le C.$$
        The idea here is straightforward: we ``double" the trace of $m(t,0,y)$ and shift it in an approriate way such that their endpoints are as close as possible. Hence one should ``move faster" on some part of the initial curve such that its velocity is bounded, to save time for connectors. See the picture below.

        \begin{center}
            \includegraphics[scale=0.6]{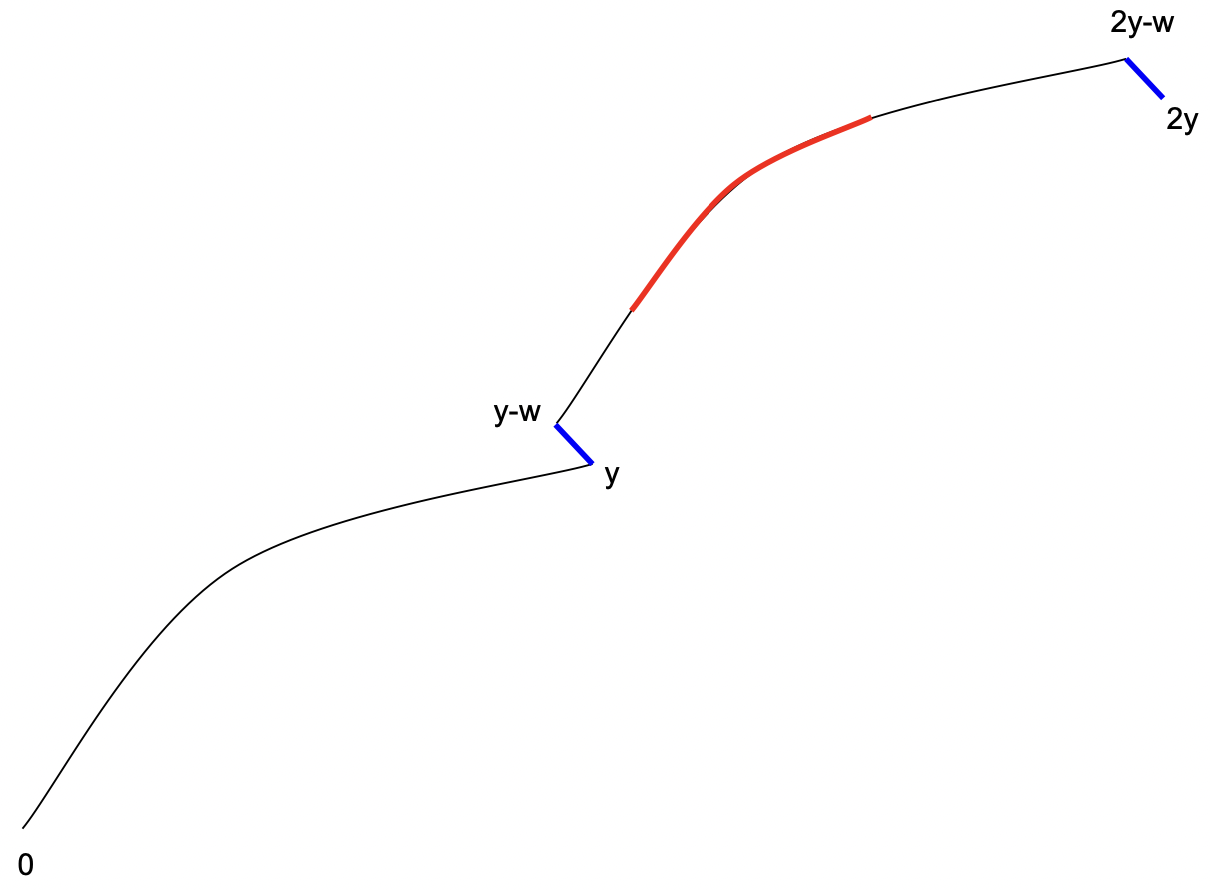}
        \end{center}
    
        We write down this idea rigourously. Let $w\in[0,1]^n$ such that $y-w\in \mathbb Z^n$. Construct a path $\mu\in\AC([0,2t],\mathbb R^n)$ with $\mu(0)=0,\mu(2t)=2y$ as below:
        $$\mu(s)=\begin{cases}
            \eta(s),& 0\le s<t;\\
            \dfrac{-s+t}{\lceil t \rceil+2-t}w+y, &t\le s\le \lceil t\rceil+2;\\
            \eta(s-\lceil t\rceil-2)+y-w, &\lceil t\rceil +2\le s<\lceil t\rceil +l+2;\\
            \eta\left(6\left(s-\lceil t\rceil-l-2\right)+l\right)+y-w, &\lceil t\rceil +l+2\le s<\lceil t\rceil +l+3;\\
            \eta(s-\lceil t\rceil+3)+y-w, &\lceil t\rceil +l+3\le s<\lceil t\rceil +t-3;\\
            \dfrac{s-\lceil t\rceil-t+3}{t-\lceil t\rceil+3}w+2y-w; &\lceil t\rceil +t-3\le s\le 2t.   
        \end{cases}$$
        For $1\le i\le 6$, denote the cost of $\mu$ on $i-$th time interval by $K_i$. Note that
        $$m(2t,0,2y)\le \int_{0}^{2t}L\left(\mu(s),s,\dot{\mu}(s)\right)ds=\sum_{i=1}^{6}K_i,$$
        so the rest is to estimate each $K_i$ to show that their sum is close to $2m(t,0,y)$. Indeed:
        \begin{itemize}
            \item Estimations for $K_1$:
            \begin{equation*}
                \begin{split}
                K_1 &= \int_{0}^{t}L\left(\mu(s),s,\dot{\mu}(s)\right)ds = \int_{0}^{t}L\left(\eta_1(s),s,\dot{\eta_1}(s)\right)ds = m(t,0,y).
                \end{split}
            \end{equation*}
            \item Estimations for $K_2$:
            \begin{equation*}
                \begin{split}
                K_2 &= \int_{t}^{\lceil t\rceil+2}L\left(\mu(s),s,\dot{\mu}(s)\right)ds\le \int_{t}^{\lceil t\rceil+2}\left(\beta\left|\dot{\mu}(s)\right|^m+K\right)ds\\
                &\le \int_{t}^{\lceil t\rceil+2}\left(\beta\cdot\dfrac{1}{\left(\lceil t\rceil+2-t\right)^m}+K\right)ds = \dfrac{\beta}{\left(\lceil t\rceil+2-t\right)^{m-1}}+K\left(\lceil t\rceil+2-t\right)\\
                &\le \dfrac{\beta}{2^m}+3K\le C.
                \end{split}
            \end{equation*}
            \item Estimations for $K_6$:
            \begin{equation*}
                \begin{split}
                K_6 &= \int_{\lceil t\rceil +t-3}^{2t}L\left(\mu(s),s,\dot{\mu}(s)\right)ds\le \int_{\lceil t\rceil +t-3}^{2t}\left(\beta\left|\dot{\mu}(s)\right|^m+K\right)ds\\
                &\le \int_{t}^{\lceil t\rceil+2}\left(\beta\cdot\dfrac{1}{\left(t-\lceil t\rceil+3\right)^m}+K\right)ds = \dfrac{\beta}{\left(t-\lceil t\rceil+3\right)^{m-1}}+K\left(t-\lceil t\rceil+3\right)\\
                &\le \dfrac{\beta}{2^m}+3K\le C.
                \end{split}
            \end{equation*}
            \item Estimations for $K_4$:
            \begin{equation*}
                \begin{split}
                K_4 &= \int_{\lceil t\rceil +l+2}^{\lceil t\rceil +l+3}L\left(\mu(s),s,\dot{\mu}(s)\right)ds\\
                &= \int_{\lceil t\rceil +l+2}^{\lceil t\rceil +l+3}L\bigl(\eta\left(6\left(s-\lceil t\rceil-l-2\right)+l\right)+y-w, s, \dot{\eta}\left(6\left(s-\lceil t\rceil-l-2\right)+l\right)\bigr)ds\\
                &=\dfrac{1}{6}\int_{l}^{l+6}L\left(\eta(s),\dfrac{s-l}{6}+\lceil t\rceil+l+2,6\dot{\eta}(s)\right)ds\\
                &\le \dfrac{1}{6}\int_{l}^{l+6}\left(\beta\cdot\left|6\dot{\eta}(s)\right|^m+K\right)ds\\
                &\le \dfrac{\beta}{6^{m-1}}\int_{l}^{l+6}\left|\dot{\eta}(s)\right|^mds+K\le C.
                \end{split}
            \end{equation*}
            \item Estimation for $K_3$:
            \begin{equation*}
                \begin{split}
                K_3 &= \int_{\lceil t\rceil +2}^{\lceil t\rceil +l+2}L\left(\mu(s),s,\dot{\mu}(s)\right)ds\\
                &= \int_{\lceil t\rceil +2}^{\lceil t\rceil +l+2}L\left(\eta(s-\lceil t\rceil-2)+y-w,s,\dot{\eta}(s-\lceil t\rceil-2)\right)ds\\
                &= \int_{0}^{l}L\left(\eta(s)+y-w,s+\lceil t\rceil+2,\dot{\eta}(s)\right)ds=\int_{0}^{l}L\left(\eta(s),s,\dot{\eta}(s)\right)ds.
                \end{split}
            \end{equation*}
            \item Estimation for $K_5$:
            \begin{equation*}
                \begin{split}
                K_5 &= \int_{\lceil t\rceil +l+3}^{\lceil t\rceil +t-3}L\left(\mu(s),s,\dot{\mu}(s)\right)ds\\
                &= \int_{\lceil t\rceil +l+3}^{\lceil t\rceil +t-3}L\left(\eta(s-\lceil t\rceil+3)+y-w,s,\dot{\eta}(s-\lceil t\rceil+3)\right)ds\\
                &= \int_{l+6}^{t}L\left(\eta(s)+y-w,s+\lceil t\rceil-3,\dot{\eta}(s)\right)ds=\int_{l+6}^{t}L\left(\eta(s),s,\dot{\eta}(s)\right)ds.
                \end{split}
            \end{equation*}
        \end{itemize}
        We combine all above results to yield
            \begin{equation*}
                \begin{split}
                    m(2t,0,2y)&\le \sum_{i=1}^{6}K_i\\
                    &\le m(t,0,y)+\int_{0}^{l}L\left(\eta(s),s,\dot{\eta}(s)\right)ds+\int_{l+6}^{t}L\left(\eta(s),s,\dot{\eta}(s)\right)ds+C\\
                    &= 2m(t,0,y)+C-\int_{l}^{l+6}L\left(\eta(s),s,\dot{\eta}(s)\right)ds\\
                    &\le 2m(t,0,y)+C-\int_{l}^{l+6}\left(\alpha\left|\dot{\eta}(s)\right|^m-K\right)ds\\
                    &\le 2m(t,0,y)+C+6K\le 2m(t,0,y)+C.
                \end{split}
            \end{equation*}
        Now we repeatly use this inequality to obtain
        $$m(2^kt,0,2^ky)\le 2^km(t,0,y)+C(2^k-1),$$
        which could be rewritten as
        $$\dfrac{1}{2^k}m(2^kt,0,2^ky)\le m(t,0,y)+C\left(1-\dfrac{1}{2^k}\right)$$
        for every positive integer $k$. Now let $k$ tend to infinity to finish the proof.
    \end{enumerate}
\end{proof}

Next, we prove the superadditive property.
\begin{lemma}
    Given any constant $M>0$, if $|y|\le Mt$ then
        $$2m(t,0,y)\le m(2t,0,2y)+C.$$
    In particular,
        $$m(t,0,y)\le \overline{m}(t,0,y)+C.$$
    Here, $C>0$ is a universal constant depending only on $K,M,n,\alpha$ and $\beta$.
\end{lemma}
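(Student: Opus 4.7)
The plan is to run the doubling argument of Lemma \ref{hj-subadd-timedep} in reverse, taking a minimizer of $m(2t, 0, 2y)$ and splitting it into two admissible paths each going from $0$ to $y$ in time $t$, with the total cost controlled by the original. The key new ingredient, compared to the Burago-based argument of \cite{hungyu} for the time-independent case, is to apply Burago's lemma in space-time rather than in space alone.

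Concretely, let $\eta \in \AC([0, 2t], \mathbb R^n)$ be a minimizer of $m(2t, 0, 2y)$, and define the extended continuous path $\tilde\xi: [0, 2t] \to \mathbb R^{n+1}$ by $\tilde\xi(s) = (\eta(s), s)$. After rescaling to the unit interval, applying Burago's lemma in dimension $n+1$ produces $k \le (n+2)/2$ disjoint subintervals $[a_i, b_i] \subset [0, 2t]$ satisfying simultaneously
$$\sum_{i=1}^k \bigl(\eta(b_i) - \eta(a_i)\bigr) = y \qquad \text{and} \qquad \sum_{i=1}^k (b_i - a_i) = t.$$
Letting $P_1 = \bigcup_i [a_i, b_i]$ and $P_2 = [0, 2t] \setminus P_1$, both sets have total measure $t$ and both carry $\eta$-increments summing to $y$. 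I would then build $\mu_1, \mu_2 \in \AC([0, t], \mathbb R^n)$, each joining $0$ to $y$, by concatenating $\eta|_{[a_i, b_i]}$ (for $\mu_1$) and the complementary $\eta$-pieces (for $\mu_2$) with short connectors between consecutive pieces.

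The main obstacle is gluing the pieces correctly under the integer-only periodicity of $L$: each temporal and spatial shift used to translate a piece must lie in $\mathbb Z^{n+1}$ for the cost to be preserved via property (P1). Following the construction of Phases 2--6 in the proof of Lemma \ref{hj-subadd-timedep}, I would round each such shift to the nearest integer vector (with the $O(n)$ total spatial discrepancy absorbed into integer-length linear connectors of bounded velocity, each of bounded cost), and compensate for the $O(n)$ extra time introduced by the connectors using the ``speed-up'' trick of Phase 4. Concretely, a pigeonhole argument applied to the total Lagrangian cost of $\eta$ yields $O(n)$ unit-length sub-intervals on which $\int |\dot\eta|^m\, ds$ is bounded, each of which can then be traversed in a fraction of the original time at only $O(1)$ additional cost.

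After these adjustments, both $\mu_1$ and $\mu_2$ are admissible paths from $0$ to $y$ in time exactly $t$, with $\mathrm{cost}(\mu_1) + \mathrm{cost}(\mu_2) \le \mathrm{cost}(\eta) + C = m(2t, 0, 2y) + C$. Since $\mathrm{cost}(\mu_\ell) \ge m(t, 0, y)$ for $\ell = 1, 2$, the inequality $2 m(t, 0, y) \le m(2t, 0, 2y) + C$ follows. The in-particular statement $m(t, 0, y) \le \overline{m}(t, 0, y) + C$ is then a consequence of iterating this inequality: since $|2^j y| \le M (2^j t)$ for every $j \ge 1$, induction gives $2^j m(t, 0, y) \le m(2^j t, 0, 2^j y) + C(2^j - 1)$, and dividing by $2^j$ and sending $j \to \infty$ yields the claim.
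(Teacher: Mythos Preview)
Your proposal is correct and follows essentially the same route as the paper: apply Burago's lemma to the space-time path $(\eta(s),s)\in\mathbb R^{n+1}$, concatenate the resulting pieces (and, separately, their complements) using integer space-time shifts and short straight connectors, recover the connector time via a pigeonhole/speed-up step, add the two resulting inequalities, and finally iterate along the sequence $2^j$. The only cosmetic difference is that the paper performs the speed-up on a single subinterval of integer length $3M$ (with $M\le n+3$) located on the longest piece, rather than on several unit-length intervals; this choice keeps every subsequent temporal shift in $\mathbb Z$ automatically, and you should arrange your speed-up the same way when you write out the details.
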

\begin{proof}
    By similar arguments in the proof of Lemma \ref{hj-subadd-timedep}, we get
    $$-Ct\le m(t,0,y),m(2t,0,2y)\le Ct.$$
    Hence if $t<4(n+4)^2$, the superadditive property is obvious since
    $$2m(t,0,y)-m(2t,0,2y)\le 2Ct+Ct\le C.$$
    Now only consider $t>4(n+4)^2$. Let $\eta$ be a minimizer for $m(2t,0,2y)$. Note that $\eta\in\AC([0,2t],\mathbb R^n)$, also $\eta(0)=0$ and $\eta(2t)=2y$. By using Burago's lemma for continuous curve $\zeta:[0,2t]\rightarrow \mathbb R^{n+1}$ satisfying $\zeta(s)=(\eta(s),s)$, there exist a positive integer $k\le \frac{n+2}{2}$ and a collection of disjoint subintervals $\{[a_i,b_i]\}_{1\le i\le k}$ of $[0,2t]$ such that
    $$\sum_{i=1}^{k}\left(\eta(b_i)-\eta(a_i)\right)=\dfrac{\eta(2y)-\eta(0)}{2}=\dfrac{2y}{2}=y,\quad \text{and }\sum_{i=1}^{k}(b_i-a_i)=\dfrac{2t-0}{2}=t.$$

    For convenience, let $d_0=0$. Inductively define $\{c_i\}$ and $\{d_i\}$ for $1\le i\le k$ as follow:
    \begin{itemize}
        \item for each $1\le i\le k$, let $c_i$ be the smallest number such that $c_i-a_{i}\in \mathbb Z$ and $c_i-d_{i-1}\ge 1$; note that from this definition, $1\le c_i-d_{i-1}<2$;
        \item after defining $c_i$, choose $d_i$ satisfying $d_i-c_i=b_i-a_i$.
    \end{itemize}

    \begin{center}
        \includegraphics[scale=0.6]{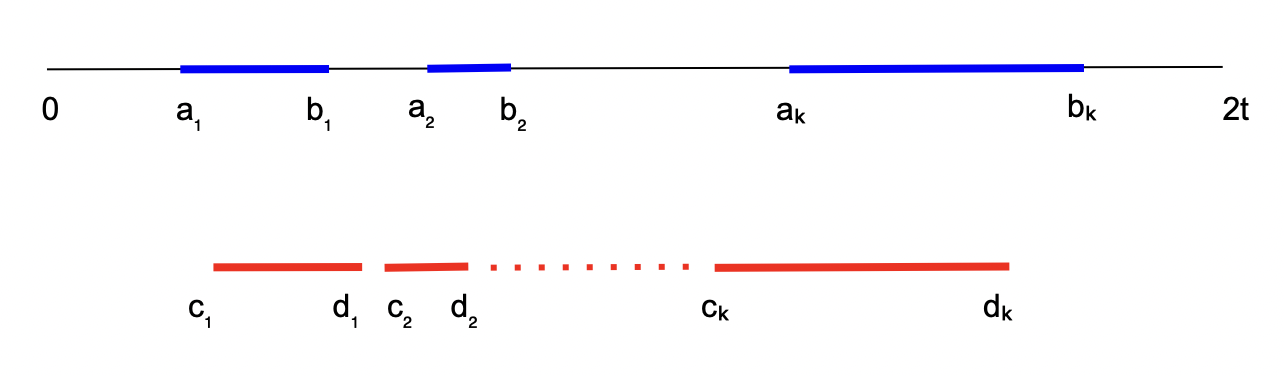}
    \end{center}

    Those difference will be time-difference for connectors between corresponding curve segments. Moreover, their time endpoints should be far enough so that the velocity of connectors would not blow up. Now on each interval $[c_i,d_i]$, we proceed periodic shifting by defining paths $\mu_i\in\AC([c_i,d_i],\mathbb R^n)$ satisfying:
    \begin{itemize}
        \item $\mu_1(c_i)\in [0,1]^n$;
        \item there exists $w_i\in \mathbb Z^n$ such that
            $$\mu_i(s+c_i-a_i)-\eta(s)=w_i\quad \text{for each $s\in[a_i,b_i]$},$$
        which leads to 
            $$\sum_{i=1}^{k}(\mu_i(d_i)-\mu_i(c_i))=\sum_{i=1}^{k}(\eta(b_i)-\eta(a_i))=y;$$
        \item for $1\le i\le k-1$, $\mu_{i}(d_i)$ and $\mu_{i+1}(c_{i+1})$ land in the same unit cube, so 
            $$\left|\mu_{i}(d_i)-\mu_{i+1}(c_{i+1})\right|\le \sqrt{n}.$$
        In other words, the spatial endpoints are as close as possible.
    \end{itemize}

    From the above definition and the periodicity of $L$, we have
    \begin{equation*}
        \begin{split}
            \int_{c_i}^{d_i}L(\mu_i(s),s,\dot{\mu_i}(s))ds &= \int_{c_i}^{d_i}L(\mu_i(s),s+a_i-c_i,\dot{\mu_i}(s))ds\\
            &=\int_{c_i}^{d_i}L(\eta(s+a_i-c_i),s+a_i-c_i,\dot{\eta}(s+a_i-c_i))ds\\
            &=\int_{a_i}^{b_i}L(\eta(s),s,\dot{\eta}(s)).ds
        \end{split}
    \end{equation*}

    If we define $M$ as a least integer such that
    $$M>\sum_{i=0}^{k-1}(c_{i+1}-d_i),$$
    then
    $$M\le \sum_{i=0}^{k-1}(c_{i+1}-d_i)+1\le 2k+1\le n+3\le 4n^2<t.$$
    Now we point out an index $1\le j\le k$ and a subinterval $[l,l+3M]$ of $[c_j,d_j]$ satisfying
    $$\int_{l}^{l+3M}L(\mu_j(s),s,\dot{\mu_j}(s))ds\le C$$
    to reuse our time-saving idea. First, there exists $j$ such that $d_j-c_j>3M$ and
    $$\int_{c_j}^{d_j}L(\mu_j(s),s,\dot{\mu_j}(s))ds\le Ct.$$
    Indeed, the existence of $j$ follows from
    \begin{equation*}
        \begin{split}
            \max_{1\le i\le k}(d_i-c_i) &\ge \dfrac{1}{k}\sum_{i=1}^{k}(d_i-c_i)=\dfrac{1}{k}\sum_{i=1}^{k}(b_i-a_i)=\dfrac{t}{k}\\
            &\ge \dfrac{2t}{n+2}\ge \dfrac{8(n+4)^2}{n+2}>3(n+3)\ge 3M.
        \end{split}
    \end{equation*}
    And by defining $b_0=0$ and $a_{k+1}=2t$, we obtain
    \begin{equation*}
        \begin{split}
            & \int_{c_j}^{d_j}L(\mu_i(s),s,\dot{\mu_i}(s))ds\\
            & = \int_{a_j}^{b_j}L(\eta(s),s,\dot{\eta}(s))ds\\
            &=\int_{0}^{2t}L(\eta(s),s,\dot{\eta}(s))ds-\int_{0}^{a_j}L(\eta(s),s,\dot{\eta}(s))ds-\int_{b_j}^{2t}L(\eta(s),s,\dot{\eta}(s))ds\\
            &\le m(2t,0,2y)-\int_{0}^{a_j}(\alpha\left|\dot{\eta}(s)\right|^m-K)ds-\int_{b_j}^{2t}(\alpha\left|\dot{\eta}(s)\right|^m-K)ds\\
            &\le m(2t,0,2y)+K(a_j+2t-b_j)\le Ct+2Kt\le Ct.
        \end{split}
    \end{equation*}
    Again by subdividing $[c_i,d_i]$ into consecutive segments with length $3M$ and possibly a redundant part shorter than $3M$, then using arguments in the proof of Lemma \ref{hj-subadd-timedep}, there exists a subinterval $[l,l+3M]$ of $[c_j,d_j]$ satisfying
    $$\int_{l}^{l+3M}L(\mu_i(s),s,\dot{\mu_i}(s))ds\le Ct\cdot\dfrac{3M}{t}\le C.$$
    From this we easily deduce
    $$-C\le \int_{l}^{l+3M}\left|\dot{\eta}(s)\right|^mds\le C.$$

    That boundedness saves a time interval length $2M$ for straight connectors. Now the essential step is to construct a path $\zeta\in \AC([0,t],\mathbb R^n)$ such that
    $$\int_{0}^{t}L(\zeta(s),s,\dot{\zeta}(s))ds\le \sum_{i=1}^{k}\int_{a_i}^{b_i}L(\eta(s),s,\dot{\eta}(s))ds+C.$$
    We define $\zeta$ as follows:
    \begin{itemize}
        \item for $0\le s< c_1$,
            $$\zeta(s)=\dfrac{\mu_1(c_1)s}{c_1};$$
        \item if $1\le i<j$, then
            $$\zeta(s)=\begin{cases}
                \mu_i(s),& c_i\le s<d_i,\\
                \dfrac{(\mu_{i+1}(c_{i+1})-\mu_i(d_i))(s-d_i)}{c_{i+1}-d_i}+\mu_i(d_i),& d_i\le s<c_{i+1};
            \end{cases}$$
        \item if $i=j$, then
            $$\zeta(s)=\begin{cases}
                \mu_j(s), & c_i\le s<l,\\
                \mu_j(3s-2l), & l\le s<l+M,\\
                \mu_j(s+2M), & l+M\le s<d_j-2M,\\
                \dfrac{(\mu_{j+1}(c_{j+1})-\mu_j(d_j))(s+2M-d_j)}{c_{j+1}-d_j}+\mu_j(d_j), & d_j-2M\le s<c_{j+1}-2M;
            \end{cases}$$
        \item if $j<i<k$, then
            $$\zeta(s)=\begin{cases}
                \mu_i(s+2M),& c_i-2M\le s<d_i-2M,\\
                \dfrac{(\mu_{i+1}(c_{i+1})-\mu_i(d_i))(s+2M-d_i)}{c_{i+1}-d_i}+\mu_i(d_i),& d_i-2M\le s<c_{i+1}-2M;
            \end{cases}$$
        \item if $i=k$, then
            $$\zeta(s)=\begin{cases}
                \mu_i(s+2M),& c_k-2M\le s<d_k-2M,\\
            \dfrac{(y-\mu_k(d_k))(s-d_k+2M)}{t-d_k
            +2M}+\mu_k(d_k),& d_k-2M\le s\le t.
            \end{cases}$$
    \end{itemize}

    \begin{center}
        \includegraphics[scale=0.6]{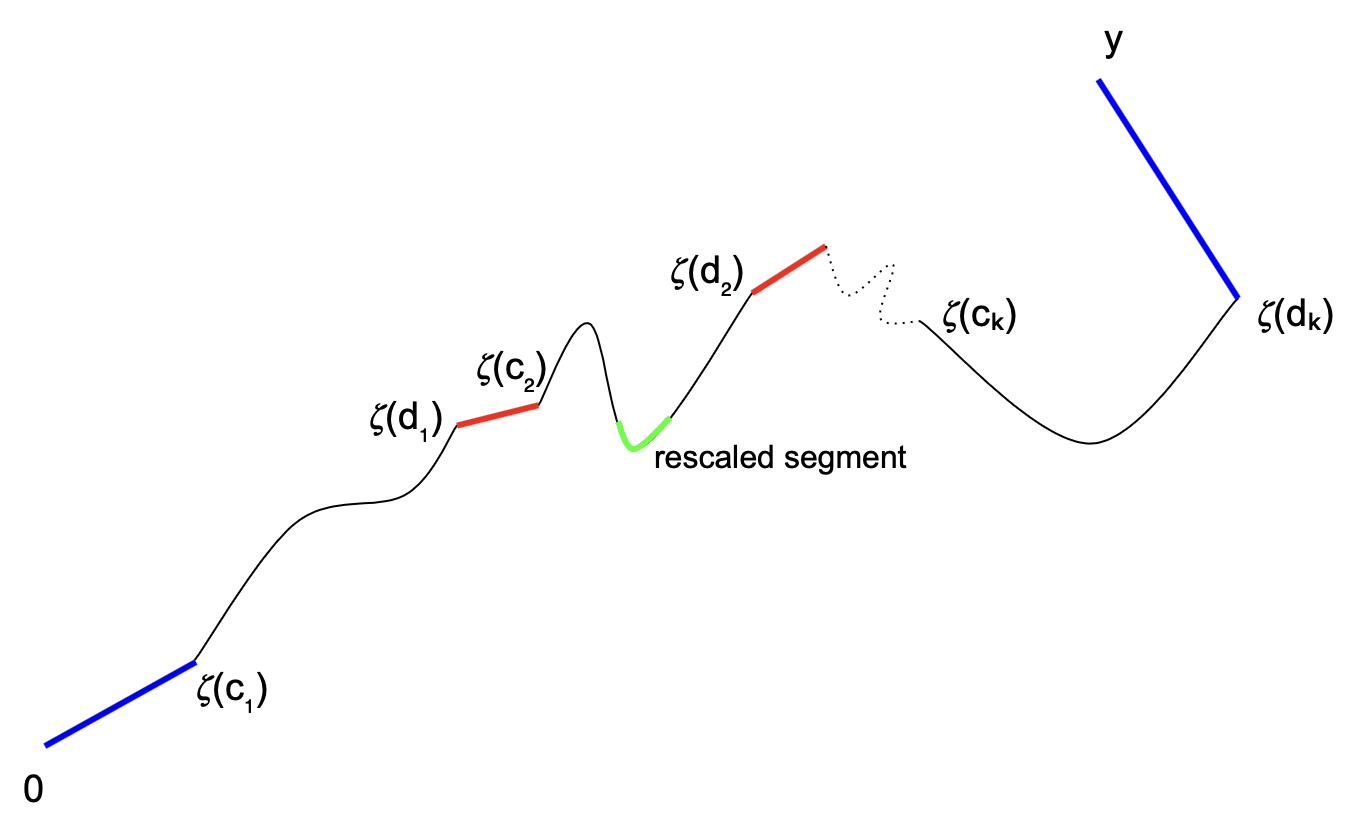}
    \end{center}

    See the above picture. The rest is just calculation.
    \begin{itemize}
        \item Keep in mind that $d_0=0$, then for $0\le i<k$:
        \begin{equation*}
            \begin{split}
                \int_{d_i}^{c_{i+1}}L(\zeta(s),s,\dot{\zeta}(s))ds &\le \int_{d_i}^{c_{i+1}}\left(\beta\left|\dot{\zeta}(s)\right|^m+K\right)ds\\
                &\le \beta\int_{d_i}^{c_{i+1}}\left|\dot{\zeta}(s)\right|^mds+K(c_{i+1}-d_i)\\
                &\le \beta\int_{d_i}^{c_{i+1}}\max_{1\le i<k}\left\{\left|\dfrac{\mu_1(c_1)}{c_1}\right|^m,\left|\dfrac{\mu_{i+1}(c_{i+1})-\mu_i(d_i)}{c_{i+1}-d_i}\right|^m\right\}ds+2K\\
                &\le \beta(c_{i+1}-d_i)\left(\dfrac{\sqrt{n}}{2}\right)^m+C\le 2\beta\left(\dfrac{\sqrt{n}}{2}\right)^m+C\le C.
            \end{split}
        \end{equation*}
        \item The cost of $\zeta$ on $[d_k-2M,t]$ is bounded by a constant. Indeed:
        \begin{equation*}
            \begin{split}
                \int_{d_k-2M}^{t}L(\zeta(s),s,\dot{\zeta}(s))ds &\le \int_{d_k-2M}^{t}\left(\beta\left|\dot{\zeta}(s)\right|^m+K\right)ds\\
                &\le \beta\int_{d_k-2M}^{t}\dfrac{\left|y-\mu_k(d_k)\right|^m}{(t-d_k+2M)^m}ds+KM\\
                &\le \dfrac{\beta}{(t-d_k+2M)^m}\int_{t-M}^{t}{\left|\sum_{i=1}^{k}\left(\mu_i(d_i)-\mu_i(c_i)\right)-\mu_k(d_k)\right|^m}ds+C\\
                &\le C\int_{t-M}^{t}\left(\sum_{i=0}^{k-1}\left|\mu_i(d_i)-\mu_{i+1}(c_{i+1})\right|\right)^mds+C\\
                &\le C\int_{t-M}^{t}\left(k\sqrt{n}\right)^mds+C=Ck^m\left(\sqrt{n}\right)^m+C\le C.
            \end{split}
        \end{equation*}
        \item When $1\le i<j$ and $s$ lies outside $[c_i,d_i]$, we have
            $$\int_{c_i}^{d_i}L(\zeta(s),s,\dot{\zeta}(s))ds = \int_{c_i}^{d_i}L(\mu_i(s),s,\dot{\mu_i}(s))ds = \int_{a_i}^{b_i}L(\eta(s),s,\dot{\eta}(s))ds.$$
        \item When $j<i\le k$ and $s$ lies in $[c_i-2M,d_i-2M]$, we have
        \begin{equation*}
            \begin{split}
                \int_{c_i-2M}^{d_i-2M}L(\zeta(s),s,\dot{\zeta}(s))ds &= \int_{c_i-2M}^{d_i-2M}L(\mu_i(s+2M),s,\dot{\mu_i}(s+2M))ds\\
                &= \int_{c_i}^{d_i}L(\mu_i(s),s-2M,\dot{\mu_i}(s))ds\\
                &= \int_{c_i}^{d_i}L(\mu_i(s),s,\dot{\mu_i}(s))ds = \int_{a_i}^{b_i}L(\eta(s),s,\dot{\eta}(s))ds.
            \end{split}
        \end{equation*}
        \item When $i=j$, we obtain,
            $$\int_{c_i}^{l}L(\zeta(s),s,\dot{\zeta}(s))ds = \int_{c_i}^{l}L(\mu_i(s),s,\dot{\mu_i}(s))ds$$
        and also
        \begin{equation*}
            \begin{split}
                \int_{l+M}^{d_j-2M}L(\zeta(s),s,\dot{\zeta}(s))ds &= \int_{l+M}^{d_j-2M}L(\mu_i(s+2M),s,\dot{\mu_i}(s+2M))ds\\
                &= \int_{l+3M}^{d_j}L(\mu_i(s),s-2M,\dot{\mu_i}(s))ds\\
                &= \int_{l+3M}^{d_j}L(\mu_i(s),s,\dot{\mu_i}(s))ds.
            \end{split}
        \end{equation*}
        Hence,
        \begin{equation*}
            \begin{split}
                & \int_{c_i}^{l}L(\zeta(s),s,\dot{\zeta}(s))ds+\int_{l+M}^{d_j-2M}L(\zeta(s),s,\dot{\zeta}(s))ds\\ &= \int_{c_i}^{d_j}L(\mu_i(s),s,\dot{\mu_i}(s))ds-\int_{l}^{l+3M}L(\mu_i(s),s,\dot{\mu_i}(s))ds\\
                &\le \int_{c_i}^{d_j}L(\mu_i(s),s,\dot{\mu_i}(s))ds +C.
            \end{split}
        \end{equation*}
        Moreover, since velocity of $\zeta$ is bounded on $[l,l+3M]$:
        \begin{equation*}
            \begin{split}
                \int_{l}^{l+M}L(\zeta(s),s,\dot{\zeta}(s))ds &= \int_{l}^{l+M}L(\mu_j(3s-2l),s,3\dot{\mu_j}(3s-2l))ds\\
                &= \dfrac{1}{3}\int_{l}^{l+3M}L\left(\mu_j(s),\dfrac{s+2l}{3},3\dot{\mu_j}(s)\right)ds\\
                &\le \dfrac{1}{3}\int_{l}^{l+3M}\left(\beta\left|\dot{\zeta}(s)\right|^m+K\right)ds\\
                &= \dfrac{\beta}{3}\int_{l}^{l+3M}\left|\dot{\zeta}(s)\right|^mds+KM\le C.
            \end{split}
        \end{equation*}

    \end{itemize}
    Therefore by decomposing $\zeta$ and summing by parts,
    $$\int_{0}^{t}L(\zeta(s),s,\dot{\zeta}(s))ds\le \sum_{i=1}^{k}\int_{a_i}^{b_i}L(\eta(s),s,\dot{\eta}(s))ds+C$$
    as desired. From here, we conclude
    \begin{equation}\label{super1}
        m(t,0,y)\le \sum_{i=1}^{k}\int_{a_i}^{b_i}L(\eta(s),s,\dot{\eta}(s))ds+C.
    \end{equation}
    
    Replicating all previous arguments for a collection of $k+1$ disjoint subintervals $\{[u_i,v_i]\}_{1\le i\le k+1}$ of $[0,2t]$, which is defined by
    $$[u_i,v_i]=\begin{cases}
        [0,a_i], &i=1,\\
        [b_{i-1},a_i], &2\le i\le k,\\
        [b_k,2t],&i=k+1,
    \end{cases}$$
    and also satisfies
    $$\sum_{i=1}^{k}\left(\eta(v_i)-\eta(u_i)\right)=\dfrac{\eta(2y)-\eta(0)}{2}=\dfrac{2y}{2}=y,\quad \text{and }\sum_{i=1}^{k}(v_i-u_i)=\dfrac{2t-0}{2}=t,$$
    the below estimation also holds:
    \begin{equation}\label{super2}
        m(t,0,y)\le \sum_{i=1}^{k+1}\int_{u_i}^{v_i}L(\eta(s),s,\dot{\eta}(s))ds+C.
    \end{equation}

    Adding (\ref{super1}) and (\ref{super2}) to finish the proof.
\end{proof}

Now we finalize the proof of Theorem \ref{hj-rate-timedep}.

\begin{proof}[Proof of Theorem 1.1]
    Thanks to Lemma 2.2.ii) and Lemma 2.3, for $t>0$ and $y\in \mathbb R^n$ such that $|y|\le Ct$,
    $$|m(t,0,y)-\overline{m}(t,0,y)|\le Ct.$$
    
    By scaling and translation, it suffices to prove the result for $(x,t)=(0,1)$. Since $g\in \lip(\mathbb R^n)$,
    $$|g(x)|\le |g(x)-g(0)|+|g(0)|\le C(|x|+1)$$
    for some constant $C$. Recall the optimal control formula:
    $$u^{\varepsilon}(0,1)=
    \inf_{\substack{\eta(0)=0 \\ \eta\in \AC([-\varepsilon^{-1},0],\mathbb R^n)}}
    \left\{g\left(\varepsilon\eta \left(-\varepsilon^{-1}\right)\right)+\varepsilon\int_{-\varepsilon^{-1}}^{0}L\left(\eta(s),s,\dot{\eta}(s)\right)ds \right\}.$$
    From the boundedness of $L$ and Jensen's inequality,
    \begin{equation*}
        \begin{split}
            \varepsilon\int_{-\varepsilon^{-1}}^{0}L\left(\eta(s),s,\dot{\eta}(s)ds\right) &\ge \varepsilon\int_{-\varepsilon^{-1}}^{0}\left(\alpha\left|\dot{\mu}(s)\right|^m-K\right)ds\\
            &\ge \alpha\left|\varepsilon\int_{-\varepsilon^{-1}}^{0}\dot{\eta}(s)ds\right|^m-K\\
            &= \alpha\varepsilon^m\left|\eta(-\varepsilon^{-1})\right|^m-K.
        \end{split}
    \end{equation*}

    Thus, for some constant $C>0$ depending only on $L,\norm{Dg}_{L^\infty(\mathbb R^n)}$ and $n$,
    \begin{equation*}
        \begin{split}
            u^{\varepsilon}(0,1) &= \inf_{\substack{\eta(0)=0 \\ \varepsilon\left|\eta(-\varepsilon^{-1})\right|\le C}}\left\{g\left(\varepsilon\eta \left(-\varepsilon^{-1}\right)\right)+\varepsilon\int_{-\varepsilon^{-1}}^{0}L\left(\eta(s),s,\dot{\eta}(s)\right)ds \right\}\\
            &= \inf_{|y|\le C}\left(g(y)+\varepsilon m(\varepsilon^{-1},-\varepsilon^{-1}y,0)\right)\\
            &= \inf_{|y|\le C}\left(g(y)+\overline{m}(1,0,-y)\right)+O(\varepsilon)\\
            &= u(0,1)+O(\varepsilon).
        \end{split}
    \end{equation*}
    Hence the proof is completed.
\end{proof}




\bibliographystyle{amsplain}
\bibliography{hj}

\end{document}